\newtheorem{theorem}{Theorem}[section]
\newtheorem{lemma}[theorem]{Lemma}
\newtheorem{proposition}[theorem]{Proposition}
\newtheorem{corollary}[theorem]{Corollary}
\theoremstyle{definition}
\theoremstyle{remark}
\numberwithin{equation}{section}
\renewcommand{\P}{\mathbb{P}}
\newcommand{\R}{\mathbb{R}}
\newcommand{\N}{\mathbb{N}}
\newcommand{\X}{\mathrm{X}}
\newcommand{\Y}{\mathrm{Y}}
\newcommand{\B}{\mathbf{B}}
\renewcommand{\S}{\mathbf{S}}
\renewcommand{\mod}{/}
\begin{document}

\title{The isometry group of $L^{p}(\mu,\X)$ is SOT-contractible}
\author{Jarno Talponen}
\address{University of Helsinki, Department of Mathematics and Statistics, Box 68, (Gustaf H\"{a}llstr\"{o}minkatu 2b) FI-00014 University 
of Helsinki, Finland}
\email{talponen@cc.helsinki.fi}

\subjclass{Primary 46B04; Secondary 46B25, 46E40}
\date{\today}

\begin{abstract}
We will show that if $(\Omega,\Sigma,\mu)$ is an atomless positive measure space, $\X$ is a Banach space and $1\leq p<\infty$, then
the group of isometric automorphisms on the Bochner space $L^{p}(\mu,\X)$ is contractible in the strong operator topology.
We do not require $\Sigma$ or $\X$ above to be separable.
\end{abstract}
\maketitle

\section{Introduction}
This article deals with the topological structure of isometry groups of Banach spaces. Recall that an isometry group
$\mathcal{G}_{\X}$ of a Banach space $\X$ consists of linear isometric isomorphisms $T\colon \X\rightarrow\X$.   

The connectedness of groups of linear automorphisms with respect to the norm topology is a classical topic by now, see
e.g. \cite{Ku,Mit}. For example, Kuiper proved already in 1965 that the linear automorphism group $\mathrm{GL}_{\ell^{2}}$ 
and the isometry group $\mathcal{G}_{\ell^{2}}$ of $\ell^{2}$ are operator norm contractible. 
On the other hand, spaces $\mathcal{G}_{\ell^{p}}$ and $\mathcal{G}_{L^{p}(\mu)}$ are discrete in the operator 
norm topology for $1\leq p\leq \infty,\ p\neq 2,$ (\cite[p.112]{LT}, \cite[p.57]{Ca}) and a fortiori not contractible. 
Our main result result shows that the situation is surprisingly different when observing $\mathcal{G}_{L^{p}(\mu)}$ 
in the strong operator topology:
\begin{theorem}\label{thm1}
Let $\X$ be a Banach space, $\mu$ an atomless positive measure and $1\leq p<\infty$. 
Then the isometry group $\mathcal{G}_{L^{p}(\mu,\X)}$ of the Bochner space $L^{p}(\mu,\X)$ 
endowed with the strong operator topology is contractible. Here $\X$ and $L^{p}(\mu,\X)$ can be regarded as 
real or complex spaces.
\end{theorem}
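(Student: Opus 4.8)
The plan is to contract $\mathcal{G}_{L^{p}(\mu,\X)}$ by an explicit homotopy that gradually \emph{compresses} the action of a given isometry into a measurable part of $\Omega$ of vanishing relative size, while acting as the identity on the complementary part; as the compression becomes total the action is squeezed to nothing. Write $E=L^{p}(\mu,\X)$. Since $p$ is allowed to equal $2$ and $\X$ is an arbitrary Banach space, no Banach--Lamperti normal form for the elements of $\mathcal{G}_{E}$ is available, so the homotopy must be manufactured from nothing but the $\ell^{p}$-decomposition structure of $E$ and the atomlessness of $\mu$. The payoff of this is that the construction applies to \emph{every} isometry at once, with no classification: this is exactly what replaces Kuiper's operator-norm rotation trick, which is unavailable here because rotations fail to be isometries for $p\neq 2$.

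The engine of the argument is a measure-theoretic \emph{compression flow}. Using that $\mu$ is atomless, I would produce a decreasing family of measurable sets $(A_{t})_{t\in[0,1]}$ with $A_{0}=\Omega$ and $A_{1}$ null, together with a family of \emph{onto} isometric isomorphisms (compressions) $\Theta_{t}\colon E\to L^{p}(\mu|_{A_{t}},\X)$ such that $\Theta_{0}=I$, such that $t\mapsto\Theta_{t}$ is strongly continuous, and such that $\Theta_{t}\to I$ strongly as $t\to 0$. In the model case $\mu=\mu'\times\mathrm{Leb}_{[0,1]}$ one takes $A_{t}=\Omega'\times[0,1-t]$ and the explicit dilation
\[
(\Theta_{t}f)(\omega',s)=(1-t)^{-1/p}\,f(\omega',s/(1-t)),\qquad s\in[0,1-t],
\]
which is readily checked to be an isometry onto $L^{p}(\mu|_{A_{t}},\X)$, to satisfy $\Theta_{0}=I$, and to be strongly continuous in $t$ (dilations are strongly continuous). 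For a general atomless $\mu$ I would reduce to such a ``sliceable'' situation: passing to the localizable decomposition (an $L^{p}$-function has $\sigma$-finite support) and invoking Maharam's classification, each homogeneous block factors a copy of the unit interval on which the dilation above lives, and the blocks are assembled through the $\ell^{p}$-direct sum. \textbf{This measure-theoretic step is the main obstacle}: handling $\mu$ with neither separability nor $\sigma$-finiteness, and in particular securing the endpoint continuity $\Theta_{t}\to I$ as $t\to 0$ uniformly enough across the decomposition, is the delicate technical heart of the proof.

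Granting the compression flow, I would define the contraction by
\[
H(T,t)=(\Theta_{t}\,T\,\Theta_{t}^{-1})\oplus I
\]
relative to the isometric splitting $E=L^{p}(\mu|_{A_{t}},\X)\oplus_{p}L^{p}(\mu|_{\Omega\setminus A_{t}},\X)$. Here $\Theta_{t}\,T\,\Theta_{t}^{-1}$ is the surjective isometry obtained by transporting $T$ into the subspace supported on $A_{t}$, so $H(T,t)$ is a surjective isometry of $E$, i.e.\ an element of $\mathcal{G}_{E}$; note that this makes sense for an \emph{arbitrary} isometry $T$, with no block-diagonal or structural hypothesis. The endpoints are immediate: $H(\cdot,0)=I_{E}$-conjugation gives $H(T,0)=T$ since $\Theta_{0}=I$, while $A_{1}$ null forces $H(T,1)=I$, the constant map at the identity.

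It then remains to verify joint strong continuity of $H$ on $\mathcal{G}_{E}\times[0,1]$. Writing
\[
H(T,t)f=\Theta_{t}\,T\,\Theta_{t}^{-1}(f\1_{A_{t}})+f\1_{\Omega\setminus A_{t}},
\]
the dependence on $t$ runs only through the fixed continuous data $t\mapsto(\Theta_{t},\1_{A_{t}})$, while $T$ enters through a single application of an isometry. The key point is that all isometries have norm one, which yields the equicontinuity estimate $\lVert T_{n}g_{n}-Tg\rVert\le\lVert g_{n}-g\rVert+\lVert(T_{n}-T)g\rVert$; combining this with strong continuity of $t\mapsto\Theta_{t}^{-1}(f\1_{A_{t}})$ reduces joint continuity to the continuity of the compression flow established above. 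The limit $t\to 1$ returns $f$ because $\lVert f\1_{A_{t}}\rVert\to 0$, and the limit $t\to 0$ returns $Tf$ because $\Theta_{t}^{-1}(f\1_{A_{t}})\to f$ and $f\1_{\Omega\setminus A_{t}}\to 0$. Since nothing in the argument uses any property of $\X$ beyond being a Banach space, nor any special feature of $p$, the same homotopy works for all $1\le p<\infty$ and for real or complex scalars, giving the asserted SOT-contractibility.
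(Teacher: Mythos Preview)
Your proposal is correct and follows essentially the same route as the paper. The paper's Lemma~2.2 is precisely your ``sliceable'' reduction via Maharam: it writes $L^{p}(\mu,\X)\cong\bigoplus_{i\in I}^{p}L^{p}(m,L^{p}(m^{\lambda_{i}},\X))$, factoring a Lebesgue interval out of each homogeneous block; the paper's maps $\beta_{i}(t,\cdot)$ and $\gamma_{i}(t,\cdot)$ are exactly your $\Theta_{t}^{-1}$ and $\Theta_{t}$ on the $[t,1]$-slice, and the homotopy $h(t,T)=\sum\alpha_{i}(t,\cdot)+\sum\gamma_{i}(t,P_{i}T\sum\beta_{i}(t,\cdot))$ is your $(\Theta_{t}T\Theta_{t}^{-1})\oplus I$ verbatim. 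The SOT-continuity verification likewise proceeds by the same equicontinuity-from-norm-one argument you outline.
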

This result involves the isometric structure and more precisely the $L^{p}$-structure of Bochner spaces.
For a fascinating treatment of the latter topic see \cite{LP,Gr} which are implicitly applied in this work.
Our study is also closely related to \cite{Gr_infty}, \cite{GJ} and \cite{GJK}. 

Recall that for Banach spaces $\X$ and $\Y$ a projection $P\colon \X\oplus\Y\rightarrow \Y$ 
is called an $L^{p}$-projection for a given $p\in [1,\infty)$ if 
\[||(x,y)||_{\X\oplus\Y}^{p}=||x||_{\X}^{p}+||y||_{\Y}^{p}\quad \mathrm{for\ each}\ x\in \X,\ y\in \Y.\] 
Such projections commute and in fact the $L^{p}$-structure $\P_{p}(\X)$ of $\X$, i.e. the set of all $L^{p}$-projections on $\X$ 
can be regarded as a complete Boolean algebra (see \cite{LP}). 

In order to put our result in the right context, let us mention that our route to Theorem \ref{thm1} 
is via analyzing the $L^{p}$-structure of $L^{p}(\mu,\X)$. This structure depends on $\mu$ and the 
$L^{p}$-structure of $\X$, as one might expect. If $\X$ is separable, then one can write 
$\P_{p}(L^{p}(\mu,\X))=\Sigma\mod \mu\otimes \P_{p}(\X)$ in a suitable sense, see \cite{GJK}.
It might first seem, bearing the $L^{p}$-structures in mind, that an effective means
to analyze the connectedness properties of $\mathcal{G}_{L^{p}(\mu,\X)}$ would be applying 
suitably normalized $L^{p}$-decompositions of Bochner spaces. For example the $p$-integral module representation (see \cite{LP}) 
appears to be a natural tool for such an approach.

In fact, the $L^{p}$-structure of $L^{p}(\mu,\X)$ is simple to represent if $\X$ is a separable Banach space having only trivial 
$L^{p}$-structure and $1\leq p<\infty,\ p\neq 2$. 
Then the isometries $T\in \mathcal{G}_{L^{p}(\mu,\X)}$ can be represented, in a suitable sense, as
\begin{equation}\label{eq: *}
Tf(t)=\sigma_{t}\left(\frac{d(\mu\circ \phi^{-1})}{d\mu}(t)\right)^{\frac{1}{p}}f\phi^{-1}(t)\ \mathrm{for}\ f\in L^{p}(\Omega,\Sigma,\mu,\X),
\end{equation}
where $\sigma\colon \Omega\rightarrow \mathcal{G}_{\X};\ t\mapsto \sigma_{t}$ is strongly measurable and 
$\phi\colon \Sigma\mod\mu\leftrightarrow \Sigma\mod \mu$ is a Boolean
isomorphism (see \cite{Gr} and also \cite{Gr_infty}).
Moreover, there is in general a close connection between the $L^{p}$-structure of a given Lebesgue-Bochner space and the
corresponding representations of type \eqref{eq: *}.  
 
However, there exists an obstruction, that has to be dealt with. Namely, in the setting of Theorem \ref{thm1} 
the space $\X$ may have a rich $L^{p}$-structure and be non-separable,
so that typically the $L^{p}$-structure of $L^{p}(\mu,\X)$ is very complicated and \eqref{eq: *} fails. 
Another substantial difficulty is that if $\X$ has a trivial $L^{p}$-structure but is \emph{non-separable},
then the $L^{p}$-structure of $L^{p}(\mu,\X)$ is not known explicitly, nor is it known whether representation \eqref{eq: *} holds. 
Thus we note that even though the group $\mathcal{G}_{L^{p}(\mu,\X)}$ has not been classified in the sense of $L^{p}$-structures, 
we are unexpectedly able to extract enough information of $\P_{p}(L^{p}(\mu,\X))$ in order to establish a very strong connectedness 
condition in Theorem \ref{thm1}.  
  
The way around the described problems is to employ a suitable isometric representation for $L^{p}(\mu,\X)$, which rises from 
measure-theoretic observations.

To introduce the notations applied in this paper, $\X,\Y$ and $\Y$ stand for \emph{real} Banach spaces. 
The closed unit ball and the unit sphere of $\X$ are denoted by $\B_{\X}$ and $\S_{\X}$, respectively. 
We refer to \cite{Ha} and \cite{Lac} for necessary background information in measure theory and isometric 
theory of classical Banach spaces. It is also useful to get acquainted with the machinery appearing in \cite{Gr} regarding Bochner spaces.

If $\mathcal{F}\subset \mathcal{P}(\Omega)$ and $\Sigma_{0}\subset \Sigma$ is a $\sigma$-subring such that $\mathcal{F}\subset\Sigma_{0}$ 
and for all $A\in \Sigma_{0}$ there is a set $\{B_{n}|n\in\N\}$, where each $B_{n}$ is a countable intersection of suitable elements of 
$\mathcal{F}$, and $\bigcup_{n}B_{n}=A$, then we say that $\mathcal{F}$ \emph{$\sigma$-generates} $\Sigma_{0}$.  
Recall that the strong operator topology (SOT) on $L(\X)$ is the topology inherited from $\X^{\X}$ endowed with the product topology.
Recall that each $T\in L(\X)$ has a $\mathrm{SOT}$-neighbourhood basis consisting 
of sets of the following type:
\begin{equation}\label{eq: SOTbasis}
\{S\in L(\X): ||(S-T)x_{1}||,||(S-T)x_{2}||,\ldots,||(S-T)x_{n}||<\epsilon\}, 
\end{equation}
where $x_{1},\ldots,x_{n}\in \X,\ n\in \N$ and $\epsilon>0$.

Let us make some preparations towards the proof of Theorem \ref{thm1} by introducing some auxiliary notions. 
In what follows $(\Omega,\Sigma,\mu)$ is a positive measure space, where $\Sigma$ is a $\sigma$-ring.
An arbitrary measure space may be unconveniently rich for our purposes, that is, for studying the bands of $L^{p}(\Omega,\Sigma,\mu)$ for 
$p\in [1,\infty)$. Therefore we wish to extract exactly the information which is 'recoqnized' by the $L^{p}$-structure. 
If $\Sigma$ is as above, then we will define
\[\Sigma^{L}=\{f^{-1}(\R\setminus\{0\})|f\in L^{p}(\Omega,\Sigma,\mu)\}\mod \mu .\]
Above $\{f^{-1}(\R\setminus\{0\})|f\in L^{p}(\Omega,\Sigma,\mu)\}$ is a $\sigma$-subring of $\Sigma$ and
$\Sigma^{L}$ is the quotient $\sigma$-ring formed by identifying $\mu$-null sets with $\emptyset$. 
By slight abuse of notation we denote the corresponding measure ring by $(\Sigma^{L},\mu)$ and we consider the elements 
$A\in \Sigma^{L}$ as contained in $\Omega$ in the $\mu$-a.e. sense. Hence, for $A,B\in \Sigma^{L}$, we will write 
$A\bigcup B\in \Sigma^{L}$ instead of $A\bigvee B \in \Sigma^{L}$, and so on. 
Note that $\Sigma^{L}$ does not depend on the value of $p$, as long as $p<\infty$, 
since $\{f^{-1}(\R\setminus\{0\})|f\in L^{p}(\Omega,\Sigma,\mu)\}\subset \Sigma$ is just the $\sigma$-subring of
$\sigma$-finite sets. The motivation of this concept becomes clear in the subsequent results. 
Given $p\in [1,\infty)$, a set $I$ and Banach spaces $\X_{i}$ for $i\in I$, we denote by $\bigoplus_{i\in I}^{p}\X_{i}$
the $L^{p}$-sum of the spaces $\X_{i}$. That is, $(x_{i})_{i\in I}\in \prod_{i\in I} \X_{i}$
satisfies $(x_{i})_{i\in I}\in \bigoplus^{p}_{i\in I}\X_{i}$ if and only if 
$||(x_{i})_{i\in I}||^{p}\stackrel{\cdot}{=}\sum_{i\in I}||x_{i}||_{\X_{i}}^{p}<\infty$. The space $\bigoplus_{i\in I}^{p}\X_{i}$ 
is endowed with the complete norm $||\cdot||$. We will denote by 
$P_{j}\colon \bigoplus_{i\in I}^{p} \X_{i}\rightarrow \X_{j}$
the $L^{p}$-projection onto $\X_{j}$ for each $j\in I$, where $X_{i}$ is regarded
as a subspace of $\bigoplus_{i\in I}^{p}\X_{i}$ in the natural way. 
Hence each $x\in \X_{j}$ is thought of as an element of $\bigoplus_{i\in I}^{p}\X_{i}$ as $P_{j}x=x$. 

The Lebesgue measure on $[0,1]$ is denoted by $m$ and if $\kappa$ is a non-zero cardinal, then we denote the product measure on 
$[0,1]^{\kappa}$ by $m^{\kappa}\colon \Sigma_{[0,1]^{\kappa}}\rightarrow \R$, where $\Sigma_{[0,1]^{\kappa}}$ is the 
corresponding product $\sigma$-algebra.

The \emph{orbit} of $x\in \S_{\X}$ is $\mathcal{G}_{\X}(x)\stackrel{\cdot}{=}\{T(x)|T\in \mathcal{G}_{\X}\}$.

\section{Results}
We will require the following auxiliary results.

\begin{lemma}\label{fact2}
For given $f\in L^{p}(m,\X)$ and $t_{0}\in [0,1)$ it holds that 
\[\lim_{\substack{t\rightarrow t_{0}\\ t\in [0,1]}}\int_{t_{0}}^{1}||f(h_{t}(s))-f(h_{t_{0}}(s))||^{p}\ \mathrm{d}s=0,\]
where $h_{t}\colon [t_{0},1]\rightarrow [t,1];\ h_{t}(s)=\left(\frac{t-t_{0}}{1-t_{0}}+\frac{1-t}{1-t_{0}}s\right)$ for 
$t,s \in [0,1]$.
\end{lemma}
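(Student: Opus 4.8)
The plan is to prove this by a standard approximation argument, reducing to the case of a continuous integrand and exploiting that $h_{t_0}$ is the identity on $[t_0,1]$ while $h_t\to h_{t_0}$ uniformly as $t\to t_0$. First I would record the elementary facts about the affine maps $h_t$: one checks directly that $h_{t_0}(s)=s$, that each $h_t$ carries $[t_0,1]$ onto $[t,1]$ with constant derivative $\frac{1-t}{1-t_0}$, and --- the computation I would do first --- that
\[
h_t(s)-s=\frac{t-t_0}{1-t_0}\,(1-s),
\]
so that $\sup_{s\in[t_0,1]}\abs{h_t(s)-s}\le\abs{t-t_0}\to 0$ as $t\to t_0$.

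Then, given $\epsilon>0$, I would choose a continuous $g\in C([0,1],\X)$ with $\|f-g\|_{L^p(m,\X)}<\epsilon$, using the density of continuous functions in $L^{p}(m,\X)$ (valid for an arbitrary Banach space $\X$, since simple functions are dense and indicators of measurable sets are approximated in $L^{p}$ by continuous functions via the regularity of $m$). By Minkowski's inequality I would split the $L^{p}([t_0,1],\X)$-norm to be estimated as $I_1+I_2+I_3$, where $I_1$ and $I_3$ are the norms of $(f-g)\circ h_t$ and $(f-g)\circ h_{t_0}$ respectively, and $I_2$ is the norm of $g\circ h_t-g\circ h_{t_0}$.

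The term $I_3$ is immediate, since $h_{t_0}=\mathrm{id}$ gives $I_3\le\|f-g\|_{L^p}<\epsilon$. For $I_2$ I would use that $g$, being continuous on the compact interval $[0,1]$, is uniformly continuous; combined with the uniform bound $\sup_{s}\abs{h_t(s)-s}\le\abs{t-t_0}$ from the first step, this forces $\|g(h_t(s))-g(s)\|$ to be uniformly small in $s$ once $t$ is close to $t_0$, whence $I_2\le\epsilon$. Finally, for $I_1$ I would change variables $u=h_t(s)$, picking up the Jacobian $\frac{1-t_0}{1-t}$ and the new interval $[t,1]\subseteq[0,1]$, to obtain
\[
I_1^{p}\le\frac{1-t_0}{1-t}\,\|f-g\|_{L^p}^{p}.
\]
Letting $t\to t_0$ and recalling that $\epsilon$ was arbitrary then yields the claim.

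The only point requiring genuine care --- and the closest thing to an obstacle --- is the control of $I_1$: the change of variables introduces the Jacobian factor $\frac{1-t_0}{1-t}$, and when $t<t_0$ the image interval $[t,1]$ is strictly larger than $[t_0,1]$, so one must observe that the integrand there is $\|f-g\|^{p}$ integrated over a subset of $[0,1]$ and is hence still dominated by the full $L^{p}$-norm. Because $t_0<1$ is fixed, the factor $\frac{1-t_0}{1-t}$ stays bounded (indeed tends to $1$) as $t\to t_0$, so this term contributes at most a fixed multiple of $\epsilon$. I note that no compactness of $\X$ nor any separability hypothesis enters the argument, which is consistent with the generality the paper insists on.
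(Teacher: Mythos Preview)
Your argument is correct and follows essentially the same approximation-plus-uniform-continuity route that the paper sketches; the paper reduces to the scalar case via simple functions and then invokes Lusin's theorem, whereas you approximate $f$ directly by a continuous $\X$-valued function, which is a mild streamlining of the same idea. The key estimates you record---$h_{t_{0}}=\mathrm{id}$, the uniform bound $\sup_{s}\abs{h_{t}(s)-s}\leq\abs{t-t_{0}}$, and the change-of-variables control of $I_{1}$ with Jacobian $\frac{1-t_{0}}{1-t}$ bounded near $t_{0}$---are all in order.
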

\begin{proof}
The claim reduces to the analogous scalar-valued statement by approximating $f$ with simple functions. 
This in turn can be obtained by a straightforward modification of the proof of the classical fact that
$\lim_{h\rightarrow 0}\int_{\R}|g(s+h)-g(s)|\ \mathrm{d}s=0$ for $g\in L^{p}(\R),\ p<\infty$,
which exploits Lusin's theorem.
\end{proof}

\begin{lemma}\label{thm2}
Let $\X$ be a Banach space, $(\Omega,\Sigma,\mu)$ be an atomless positive measure space and $p\in [1,\infty)$.
Then there exists a set $I$ such that $L^{p}(\mu,\X)$ is isometrically isomorphic to 
$\bigoplus_{i\in I}^{p}L^{p}(m,L^{p}(m^{\lambda_{i}},\X))$, 
where $\lambda_{i}$ are non-zero cardinals for $i\in I$.
\end{lemma}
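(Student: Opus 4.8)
The plan is to reduce the statement to the classification theory of measure algebras and then apply Maharam's theorem. First I would observe that the Bochner space $L^{p}(\mu,\X)$ only ``sees'' the $\sigma$-finite sets, so that it is canonically isometric to the Bochner space built over the measure ring $(\Sigma^{L},\mu)$; this is precisely the point of introducing $\Sigma^{L}$, which records exactly the $L^{p}$-relevant part of $\Sigma$ and is an atomless measure ring since $\mu$ is atomless. I would then exhaust $\Omega$ by a maximal pairwise disjoint family $(E_{j})_{j\in J}$ of elements of $\Sigma^{L}$ of finite positive measure. By maximality, and because every element of $\Sigma^{L}$ is $\sigma$-finite, this family partitions $\Omega$ in the $\Sigma^{L}$-sense, so $(\Sigma^{L},\mu)$ is strictly localizable and
\[L^{p}(\mu,\X)\cong\bigoplus_{j\in J}^{p}L^{p}(\mu|_{E_{j}},\X),\]
each summand being an atomless \emph{totally finite} measure ring with values in $\X$.

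Next I would invoke Maharam's classification theorem on each totally finite atomless block $L^{p}(\mu|_{E_{j}},\X)$. This decomposes $\mu|_{E_{j}}$ into a countable direct sum of \emph{homogeneous} bands, and each homogeneous atomless band of Maharam type $\lambda$ and total mass $c>0$ is isomorphic, as a measure ring after rescaling the measure by $1/c$, to the measure ring of $([0,1]^{\lambda},m^{\lambda})$. A measure-preserving isomorphism of measure rings carries indicator functions to indicator functions, hence simple $\X$-valued functions to simple $\X$-valued functions isometrically, and so extends by density to an isometric lattice isomorphism of the associated Bochner spaces; the rescaling constant is absorbed by the isometry $f\mapsto c^{1/p}f$. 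Reindexing the resulting homogeneous blocks over a single set $I$ and using associativity of $L^{p}$-sums, I obtain
\[L^{p}(\mu,\X)\cong\bigoplus_{i\in I}^{p}L^{p}(m^{\lambda_{i}},\X).\]
Since $\mu$ is atomless, every homogeneous block is atomless and therefore has infinite Maharam type, so each $\lambda_{i}$ is an infinite, and in particular non-zero, cardinal.

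Finally I would peel off one Lebesgue factor from each summand. For infinite $\lambda_{i}$ the cardinal identity $1+\lambda_{i}=\lambda_{i}$ yields an isomorphism of the measure rings of $([0,1]^{\lambda_{i}},m^{\lambda_{i}})$ and of $([0,1]\times[0,1]^{\lambda_{i}},m\times m^{\lambda_{i}})$, and the Fubini-type identification for Bochner spaces gives
\[L^{p}(m^{\lambda_{i}},\X)\cong L^{p}(m\times m^{\lambda_{i}},\X)\cong L^{p}(m,L^{p}(m^{\lambda_{i}},\X)).\]
Substituting this into the previous display produces the asserted form $\bigoplus_{i\in I}^{p}L^{p}(m,L^{p}(m^{\lambda_{i}},\X))$.

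I expect the main obstacle to lie in the book-keeping of the first two steps rather than in the concluding cardinal arithmetic. Concretely, the delicate points are: (i) justifying strict localizability and atomlessness of $(\Sigma^{L},\mu)$ in full generality, so that Maharam's theorem genuinely applies even when $\mu$ is neither finite nor $\sigma$-finite; and (ii) verifying carefully that each measure-ring isomorphism produced by Maharam's theorem lifts to a \emph{Bochner-valued} $L^{p}$-isometry, not merely a scalar one, and that the family of local isometries assembles into a single global isometry compatible with the band decomposition. The vector-valued lift is routine on simple functions, but it must be combined correctly with the $L^{p}$-sum structure, and this interplay is where the argument requires the most care.
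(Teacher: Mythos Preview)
Your proposal is correct and follows essentially the same route as the paper: exhaust $(\Sigma^{L},\mu)$ by a maximal disjoint family of finite-measure sets, apply Maharam's classification on each finite block to obtain homogeneous summands $L^{p}(m^{\lambda_{i}},\X)$, lift the measure-ring isomorphisms to Bochner isometries, and then peel off a Lebesgue factor via the Fubini identification $L^{p}(m\otimes m^{\lambda},\X)\cong L^{p}(m,L^{p}(m^{\lambda},\X))$. The only cosmetic differences are that the paper first passes through the scalar $L^{1}$ case (using Lamperti's disjointness criterion to translate between bands and measure-ring ideals) before lifting to $L^{p}(\mu,\X)$, and that its final step is phrased for arbitrary non-zero $\kappa$ rather than appealing to $1+\lambda=\lambda$ for infinite $\lambda$; neither changes the substance of the argument.
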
  
\begin{proof}
The argument is closely related to classical matters discussed in \cite{Lac} and \cite{GJK}. 
Recall Lamperti's classical result \cite{Lamperti} that the supports of $f,g\in L^{1}(\mu)$ are essentially disjoint 
if and only if $||f+g||+||f-g||=2(||f||+||g||)$. The crucial conclusion of this result is that the disjointness of two vectors 
$f,g\in L^{p}(\nu)$ can be detected by studying the above norms and hence each linear isometry 
$\psi\colon L^{1}(\nu)\rightarrow L^{1}(\upsilon)$, where $\nu$ and $\upsilon$ are positive measures, preserves disjointness and bands.
This also leads to the fact that a projection $P$ on $L^{1}(\mu)$ is $L^{1}$-projection with a separable range 
if and only if there is $A\in \Sigma^{L}$ such that the image of $P$ is 
$\{f\in L^{1}(\mu)|\mathrm{supp}(f)\subset A\ \mu-\mathrm{a.e.}\}$.

Since each $f\in L^{1}(\mu)$ is $\sigma$-finitely supported, it follows that the measure ring
$(\Sigma^{L},\mu)$ is $\sigma$-generated by $\mu$-finite sets. Recall that the Boolean algebra of $L^{1}$-projections
on $L^{1}(\mu)$ is complete (see \cite[Prop.1.6]{LP}). Thus, by recalling Lamperti's result we obtain that 
$\{A\setminus \bigcup \mathcal{F}|A\in \Sigma^{L}\}$ defines an ideal of $\Sigma^{L}$ for any $\mathcal{F}\subset \Sigma^{L}$. 
Hence Hausdorff's maximum principle yields a maximal family $\{V_{j}\}_{j\in J}\subset \Sigma^{L}$ of pairwise $\mu$-essentially 
disjoint $\mu$-finite sets. Note that $\bigoplus_{j\in J}^{1}L^{1}(\mu|_{V_{j}})$ is isometric to $L^{1}(\mu)$. 

By using the $\mu$-finiteness of the sets $V_{j}$ we obtain that for each $j\in J$ there is a countable set $A_{j}$ and cardinals 
$\lambda_{k}$ for $k\in A_{j}$ such that the subspace $\{f\in L^{1}(\mu)|\mathrm{supp}(f)\subset V_{j}\ \mu-\mathrm{a.e.}\}$ is isometric 
to $\bigoplus_{k\in A_{j}}^{1}L^{1}(m^{\lambda_{k}})$ (see \cite[p.127]{Lac}). 
Since $V_{j}$ are essentially disjoint, the sets $A_{j}$ can be chosen to be pairwise disjoint.

Put $I=\bigcup_{j\in J}A_{j}$. Observe that there exists an isometric isomorphims\\ 
$\phi\colon \bigoplus^{1}_{i\in I}L^{1}(m^{\lambda_{i}}) \rightarrow L^{1}(\mu)$.
According to Lamperti's result the map $\phi$ preserves bands.
Recall that if $(\Omega_{1},\Sigma_{1},\mu_{1})$ and $(\Omega_{2},\Sigma_{2},\mu_{2})$ 
are positive $\sigma$-finite measure spaces such that $L^{1}(\mu_{1})$ and $L^{1}(\mu_{2})$ are isometric, 
then there is a Boolean isomorphism 
$\tau\colon \Sigma_{1}\mod \mu_{1}\leftrightarrow \Sigma_{2}\mod \mu_{2}$ see e.g. \cite[p.477]{GJK}.
It follows from the selection of $\{V_{j}\}_{j\in J}$  
that there exist ideals $B_{i}\subset \Sigma^{L}$ for $i\in I$ such that 
\begin{enumerate}
\item[(a)]{$\Sigma^{L}$ is $\sigma$-generated by $\bigcup_{i\in I}B_{i}$,}
\item[(b)]{$B_{i}$ is Boolean isomorphic to $\Sigma_{[0,1]^{\lambda_{i}}}\mod m^{\lambda_{i}}$ for $i\in I$}
\item[(c)]{the ideals $B_{i}$ are pairwise essentially disjoint.}
\end{enumerate} 
Indeed, each ideal $B_{i}$ is determined by the isometric embedding 
\[\phi|_{L^{1}(m^{\lambda_{i}})}\colon L^{1}(m^{\lambda_{i}})\hookrightarrow L^{1}(\mu)\quad \mathrm{for}\ i\in A_{j},\] 
namely, $B_{i}$ is the essential support of the functions in the image of $\phi|_{L^{1}(m^{\lambda_{i}})}$. 

Let $p\in [1,\infty)$. For each $i\in I$ there is a Boolean isomorphism from 
$\Sigma_{[0,1]^{\lambda_{i}}}\mod m^{\lambda_{i}}$ onto the corresponding ideal $B_{i}$ of $\Sigma^{L}$.
Clearly 
\[\{f\in L^{p}(\mu,\X)|\mathrm{supp}(f)\subset B_{i}\ \mu-\mathrm{a.e.}\}\subset L^{p}(\mu,\X)\]
is a closed subspace isometric to $L^{p}(\mu|_{B_{i}},\X)$. Thus there exists an isometric isomorphism 
$U_{i}\colon L^{p}(m^{\lambda_{i}},\X)\rightarrow \{f\in L^{p}(\mu,\X)|\mathrm{supp}(f)\subset B_{i}\ \mu-\mathrm{a.e.}\}$
for $i\in I$, see e.g. \cite[p.476]{GJK}. 

Define an isometric isomorphism $U\colon \bigoplus_{i\in I}^{p}L^{p}(m^{\lambda_{i}},\X)\rightarrow L^{p}(\mu,\X)$ by 
$(f_{i})_{i\in I}\mapsto \sum_{i\in I}U_{i}(f_{i})$. Indeed, this mapping is an isometry since the ideals $B_{i}$ are disjoint.
Moreover, as $\Sigma^{L}$ is $\sigma$-generated by the ideals $B_{i}$ it is easy to see by analyzing simple functions 
of $L^{p}(\mu,\X)$ that $U$ is onto. 

The following final step finishes the proof. We claim that if $\kappa$ is a non-zero cardinal, 
then $L^{p}(m^{\kappa},\X)$ is isometrically isomorphic to $L^{p}(m,L^{p}(m^{\kappa},\X))$. 
Indeed, recall that each $m\otimes m^{\kappa}$-measurable set 
$A\subset [0,1]\times [0,1]^{\kappa}$ can be approximated in measure 
by countable unions of measurable rectangles (see e.g. \cite[p.145]{Ha}). For the scalar-valued case, 
see \cite[p.127]{Lac} and \cite[p.478-479]{GJK}. Hence the obvious identification of the spaces 
$L^{p}(m\otimes m^{\kappa},\X)$ and $L^{p}(m,L^{p}(m^{\kappa},\X))$ can be obtained, since
each simple function in $L^{p}(m\otimes m^{\kappa},\X)$ can be approximated by a sequence of simple functions
of $L^{p}(m,L^{p}(m^{\kappa},\X))$ and vice versa.
\end{proof}

Now we are ready to prove our main result.

\begin{proof}[Proof of Theorem. \ref{thm1}]
Since $\mu$ is atomless we may apply Lemma \ref{thm2} to $L^{p}(\mu,\X)$ for $1\leq p<\infty$. Thus we may write 
$L^{p}(\mu,\X)=\bigoplus_{i\in I}^{p}L^{p}(m,L^{p}(m^{\kappa_{i}},\X))$ isometrically, 
where $\kappa_{i}$ are non-zero cardinals for $i\in I$. In what follows we will denote 
$\mathcal{M}\stackrel{\cdot}{=}\bigoplus_{i\in I}^{p}L^{p}(m,L^{p}(m^{\kappa_{i}},\X))$ for the sake of brevity.
Hence it suffices to show that $\mathcal{G}_{\mathcal{M}}$ is $\mathrm{SOT}$-contractible.

For each $i\in I$ we define mappings 
\[\alpha_{i},\beta_{i},\gamma_{i}\colon [0,1]\times L^{p}(m,L^{p}(m^{\kappa_{i}},\X))\rightarrow L^{p}(m,L^{p}(m^{\kappa_{i}},\X))\] 
by $\alpha_{i}(t,f_{i})=\chi_{[0,t]}f_{i}$, 
\[\beta_{i}(t,f_{i})(s)=(1-t)^{-\frac{1}{p}}f_{i}(t+(1-t)s)\ \mathrm{and}\ \gamma_{i}(t,f_{i})\circ \beta_{i}(t,f_{i})=\chi_{[t,1]}f_{i},\]
where $f_{i}\in L^{p}(m,L^{p}(m^{\kappa_{i}},\X))$ and $s\in [0,1]$.
Above we apply the convention $0^{-\frac{1}{p}}=0$.
Clearly $\alpha_{i}(t,\cdot),\beta_{i}(t,\cdot)$ and $\gamma_{i}(t,\cdot)$ are contractive linear operators for $i\in I,\ t\in [0,1]$. 
Note that $\alpha_{i}(\cdot,f_{i})$ are continuous on $[0,1]$ and according to Lemma \ref{fact2} the same is true for 
$\beta_{i}(\cdot,f_{i})$ and $\gamma_{i}(\cdot,f_{i})$. 
 
The required homotopy $h\colon [0,1]\times \mathcal{G}_{\mathcal{M}}\rightarrow \mathcal{G}_{\mathcal{M}}$ is given by 
\begin{equation*}
\begin{array}{cc}
&h(t,T)\left(\sum_{i\in I}f_{i}\right)=\sum_{i \in I}\alpha_{i}(t,f_{i})+
\sum_{i\in I}\gamma_{i}\left(t,P_{i}T\left(\sum_{i\in I}\beta_{i}(t,f_{i})\right)\right)
\end{array}
\end{equation*}
for $t\in [0,1],\ T\in \mathcal{G}_{\mathcal{M}}$
and $\sum_{i\in I}f_{i}\in \mathcal{M}$. 
Indeed, it is straightforward to check that $h(t,T)\in \mathcal{G}_{\mathcal{M}}$ 
for each $t\in [0,1]$ and $T\in \mathcal{G}_{\mathcal{M}}$.
Moreover, $h(0,T)=T$ and $h(1,T)=\mathrm{id}$ for each $T\in \mathcal{G}_{\mathcal{M}}$.
Our next aim is to justify that $h$ is indeed a suitable homotopy with respect to the strong operator topology.
 
Let $t_{0}\in [0,1]$, $T\in \mathcal{G}_{\mathcal{M}}$ and
let $V\subset \mathcal{G}_{\mathcal{M}}$ be a $\mathrm{SOT}$-open neighbourhood
of $h(t_{0},T)$. In order to justify the $|\cdot|\times\mathrm{SOT}-\mathrm{SOT}$ continuity of $h$ at $(t_{0},T)$, we must
find an open set $W\subset ([0,1],|\cdot|)\times (\mathcal{G}_{\mathcal{M}},\mathrm{SOT})$
such that $(t_{0},T)\in W$ and $h(W)\subset V$. Fix $\epsilon>0$ and 
$f=\sum_{i\in I}f_{i},g=\sum_{i\in I}g_{i}\in \S_{\mathcal{M}}$ 
such that $||g-h(t_{0},T)(f)||<\epsilon$. 

Recall that the point $h(t_{0},T)$ has a $\mathrm{SOT-}$neighbourhood basis of type 
\eqref{eq: SOTbasis}, and the elementary topological fact that open sets are preserved in finite intersections.  
Since $\epsilon$, $f$ and $g$ were arbitrary, it suffices to show that there are open sets $\Delta\subset ([0,1],|\cdot|)$ and 
$U\subset (\mathcal{G}_{\mathcal{M}},\mathrm{SOT})$
such that $t_{0}\in \Delta,\ T\in U$ and
\begin{equation}\label{eq: hdeltau}
h(\Delta\times U)\subset \{R\in \mathcal{G}_{\mathcal{M}}:\ ||g-Rf||<\epsilon\}.
\end{equation}

Denote $\delta=\epsilon-||g-h(t_{0},T)f||>0$. There exists a finite set $J\subset I$ such that 
$||\sum_{i\in I\setminus J}f_{i}||<\frac{\delta}{4}$. Put $n=|J|$. Fix $j\in J$. Similarly as above, let $f_{j}=P_{j}f$.
Since $\beta_{j}(\cdot,f_{j})$ is continuous we can find an open interval
$\Delta_{j}\subset [0,1]$ containing $t_{0}$ such that 
$||\beta_{j}(t,f_{j})-\beta_{j}(t_{0},f_{j})||< \frac{\delta}{4n}$ for $t\in \Delta_{j}$. 
Define an $\mathrm{SOT}-$open neighbourhood $U_{j}$ of $T$ by
\[U_{j}=\{S\in \mathcal{G}_{\mathcal{M}}:\ ||(S-T)\beta_{j}(t_{0},f_{j})||<\frac{\delta}{4n}\}.\] 

Finally, by recalling the definitions of $h$, $\Delta_{j}$ and $U_{j}$ we obtain that 
\begin{equation*}
\begin{array}{ll}
||h(t,S)f_{j}-h(t_{0},T)f_{j}||&\leq ||h(t,S)f_{j}-h(t_{0},S)f_{j}||+||h(t_{0},S)f_{j}-h(t_{0},T)f_{j}||\\
&<\frac{\delta}{4n}+||(S-T)\beta_{j}(t_{0},f_{j})||<\frac{\delta}{2n}
\end{array}
\end{equation*}
for $t\in \Delta_{j}$ and $S\in U_{j}$, where $j\in J$. Put $\Delta=\bigcap_{j\in J}\Delta_{j}$ and $U=\bigcap_{j\in J}U_{j}$.
We get
\begin{flushleft}
$||h(t,S)f-h(t_{0},T)f||$\\
$\leq \left|\left|(h(t,S)-h(t_{0},T))\sum_{i\in I\setminus J}f_{i}\right|\right|+\sum_{j\in J}||h(t,S)f_{j}-h(t_{0},T)f_{j}||$\\
$<||h(t,S)-h(t_{0},T)||\ \cdot\ \left|\left|\sum_{I\setminus J}f_{i}\right|\right|+n\frac{\delta}{2n}< \delta$
\end{flushleft}
for $(t,S)\in \Delta\times U$. This means that $||g-h(t,S)f||<||g-h(t_{0},T)f||+\delta=\epsilon$ for $(t,S)\in \Delta\times U$.
Consequently $h(\Delta \times U)$ satisfies \eqref{eq: hdeltau} and the proof is complete.
\end{proof}

Recall that $\S_{L^{p}(m)},\ 1\leq p<\infty,$ consists of (exactly) $2$ orbits, namely
$\mathcal{G}_{L^{p}}(\chi_{[0,1]})$ and $\mathcal{G}_{L^{p}}(2^{\frac{1}{p}}\chi_{[0,\frac{1}{2}]})$,
both of which are dense in $\S_{L^{p}(m)}$ (see e.g. \cite{Rol}).  
\begin{corollary}
Both the orbits 
$\mathcal{G}_{L^{p}}(\chi_{[0,1]}),\mathcal{G}_{L^{p}}(2^{\frac{1}{p}}\chi_{[0,\frac{1}{2}]}),\ 1\leq p<\infty,$ 
are path-connected. 
\end{corollary}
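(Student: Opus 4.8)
The plan is to read off the corollary from Theorem \ref{thm1} together with the elementary continuity of the orbit map, so essentially no new work is needed. First I would specialize Theorem \ref{thm1} to the scalar case $\X=\R$ and $\mu=m$, the Lebesgue measure on $[0,1]$, which is indeed atomless; this yields that $\mathcal{G}_{L^{p}(m)}$ is SOT-contractible for every $1\leq p<\infty$. A contractible topological space is in particular path-connected: if $H\colon [0,1]\times \mathcal{G}_{L^{p}(m)}\to \mathcal{G}_{L^{p}(m)}$ is a contraction onto some point $T_{0}$, then for any $T$ the assignment $t\mapsto H(t,T)$ is an SOT-continuous path from $T$ to $T_{0}$, and concatenating two such paths joins any pair of isometries through $T_{0}$. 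Hence $\mathcal{G}_{L^{p}(m)}$ is SOT-path-connected.

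Next I would record the elementary fact that, for a fixed $x\in \S_{L^{p}(m)}$, the orbit map $\pi_{x}\colon (\mathcal{G}_{L^{p}(m)},\mathrm{SOT})\to (\S_{L^{p}(m)},\|\cdot\|)$ given by $\pi_{x}(T)=Tx$ is continuous. Indeed, by the very definition of the SOT recalled in \eqref{eq: SOTbasis}, convergence $T_{\alpha}\to T$ in the SOT means precisely $\|T_{\alpha}x-Tx\|\to 0$ for each vector, so $\pi_{x}$ is continuous from the SOT to the norm topology. Observe that the orbit $\mathcal{G}_{L^{p}(m)}(x)$ is, by definition, exactly the image $\pi_{x}(\mathcal{G}_{L^{p}(m)})$.

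Finally I would invoke the standard principle that the continuous image of a path-connected space is path-connected: given $y_{0},y_{1}\in \mathcal{G}_{L^{p}(m)}(x)$, pick $T_{0},T_{1}\in \mathcal{G}_{L^{p}(m)}$ with $\pi_{x}(T_{j})=y_{j}$, take an SOT-continuous path $\rho\colon[0,1]\to \mathcal{G}_{L^{p}(m)}$ from $T_{0}$ to $T_{1}$ supplied by the previous paragraph, and note that $\pi_{x}\circ\rho$ is then a norm-continuous path from $y_{0}$ to $y_{1}$ inside the orbit. Carrying this out once with $x=\chi_{[0,1]}$ and once with $x=2^{\frac{1}{p}}\chi_{[0,\frac{1}{2}]}$ proves both assertions simultaneously.

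There is no genuine obstacle in this argument; the only point deserving care is the \emph{direction} of continuity of the orbit map. One must use that SOT-convergence is exactly pointwise norm convergence, so that $\pi_{x}$ transports SOT-paths in the group to \emph{norm}-paths on the sphere, thereby delivering path-connectedness of the orbits in the intended (norm) topology of $\S_{L^{p}(m)}$ rather than in some weaker topology.
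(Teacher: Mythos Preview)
Your proposal is correct and follows essentially the same approach as the paper: both argue that the SOT-path of isometries furnished by Theorem \ref{thm1}, when evaluated at a fixed unit vector, yields a norm-continuous path inside its orbit. The paper uses the contracting homotopy $h$ directly (so that $t\mapsto h(t,T)f$ joins $g=Tf$ to $f=\mathrm{id}\,f$), whereas you phrase it via the general facts ``contractible $\Rightarrow$ path-connected'' and ``continuous image of path-connected is path-connected''; these are the same argument up to packaging.
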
 
\begin{proof}
Fix $2$ points $f,g\in \S_{\X}$ both coming from one of the above orbits. Then there exists $T\in \mathcal{G}_{L^{p}}$
such that $T(f)=g$. According to Theorem \ref{thm1} there is a homotopy $h\colon [0,1]\times \mathcal{G}_{L^{p}}\rightarrow \mathcal{G}_{L^{p}}$ such that $h(0,T)=T$ and $h(1,T)=\mathrm{id}$. Clearly $h([0,1]\times \mathcal{G}_{L^{p}})(\cdot)$ preserves orbits.
Note that $t\mapsto h(t,T)f$ defines a continuous path connecting $g$ and $f$ in $\mathcal{G}_{L^{p}}(f)\subset \S_{L^{p}}$. 
\end{proof}

The assumptions of $\mu$ being atomless or $p<\infty$ cannot be removed in Theorem \ref{thm1} even in the scalar-valued case.
If $p\in [1,\infty),\ p\neq 2,$ and $\mu$ has some atoms, then by applying the scalar-valued analogue of representatation 
\eqref{eq: *} (see \cite{Lac}) it can be verified that $\mathcal{G}_{L^{p}(\mu)}$ is not connected in the weak operator topology. 
However, if $\nu$ is the counting measure on $\N$, then $L^{p}(\nu,L^{p}(m))=L^{p}(m)$ isometrically,
whose isometry group is $\mathrm{SOT-}$contractible according to Theorem \ref{thm1}. 

\begin{proposition}
Let $(\Omega,\Sigma,\mu)$ be a positive measure space and we will regard $L^{\infty}(\mu)$ over the real field.
Then $\mathcal{G}_{L^{\infty}(\mu)}$ is totally separated in the strong operator topology.
\end{proposition}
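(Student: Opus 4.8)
The plan is to exploit the rigidity of the extreme-point structure of $B_{L^\infty(\mu)}$. Recall that a topological space is totally separated precisely when any two distinct points can be separated by a clopen set; hence it suffices, given $S,T\in\mathcal{G}_{L^\infty(\mu)}$ with $S\neq T$, to produce a SOT-clopen subset of $\mathcal{G}_{L^\infty(\mu)}$ containing $S$ but not $T$.

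First I would record the two facts that drive everything. Over the reals, the extreme points of $B_{L^\infty(\mu)}$ are exactly the unimodular functions $u$, i.e. those with $|u|=1$ $\mu$-a.e., equivalently those taking values in $\{-1,1\}$. Consequently, for any two distinct unimodular $u,v$ one has $\|u-v\|_\infty\in\{0,2\}$: the difference is $\{-2,0,2\}$-valued, so its essential supremum is $0$ if $u=v$ a.e. and $2$ otherwise. Since every surjective linear isometry induces an affine bijection of the ball onto itself, it maps extreme points to extreme points; thus each $R\in\mathcal{G}_{L^\infty(\mu)}$ carries unimodular functions to unimodular functions.

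Next, for a fixed unimodular $f$ and a fixed unimodular $g$, I claim the set $D_{f,g}=\{R\in\mathcal{G}_{L^\infty(\mu)}:Rf=g\}$ is SOT-clopen. Indeed, the evaluation $R\mapsto Rf$ is SOT-to-norm continuous by the very definition of the strong operator topology, hence so is $R\mapsto\|Rf-g\|_\infty$; since both $Rf$ and $g$ are unimodular, this map takes values only in $\{0,2\}$. Therefore $\{R:\|Rf-g\|_\infty<1\}$ and $\{R:\|Rf-g\|_\infty>1\}$ are complementary open sets partitioning the group, the former being exactly $D_{f,g}$.

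Finally I would separate $S$ and $T$. The unimodular functions span a norm-dense subspace of $L^\infty(\mu)$: the identity $\chi_A=\tfrac12\1+\tfrac12(\chi_A-\chi_{\Omega\setminus A})$ writes every characteristic function as a combination of unimodular functions, simple functions are finite such combinations, and simple functions are uniformly dense. As $S\neq T$ are bounded, they must therefore differ on some unimodular $f_0$; then $Sf_0$ and $Tf_0$ are distinct unimodular functions, so $\|Sf_0-Tf_0\|_\infty=2$. Taking $g=Sf_0$, the clopen set $D_{f_0,g}$ contains $S$ and excludes $T$, which yields total separation. I expect the only genuinely delicate point to be the identification of the extreme points of $B_{L^\infty(\mu)}$ together with their preservation under isometries; the quantization of distances and the density of the span of the extreme points are then routine.
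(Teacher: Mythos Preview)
Your proof is correct and follows the same essential strategy as the paper's: exhibit a class of test vectors on which the isometry group acts with quantized norm-distances, then use this quantization to produce a clopen separation. The paper works with characteristic functions $\chi_A$, invoking a structure theorem for isometries of $L^\infty$ (Greim) to conclude that $T\chi_A \neq S\chi_A$ forces $\|T\chi_A - S\chi_A\| \geq 1$; the separating clopen set is then $\{U: \|U\chi_C - T\chi_C\| < \tfrac12\}$ for a suitable $C\in\Sigma$. You instead work with the unimodular functions, obtaining the quantization $\|u-v\|_\infty \in \{0,2\}$ directly from their $\{-1,1\}$-valuedness, and you replace the cited structure theorem by the general Banach-space fact that surjective linear isometries preserve extreme points of the unit ball. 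Your route is therefore more self-contained: it avoids importing the external $L^\infty$-isometry result at the modest cost of verifying that the span of the extreme points is dense (which is immediate for characteristic functions in the paper's version). Both arguments are equally short once the relevant quantization is in hand.
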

\begin{proof} 
For given $T,S\in \mathcal{G}_{L^{\infty}(\mu)}$ and $A\in \Sigma$ it holds that $T(\chi_{A}) \neq S(\chi_{A})$ if and only if 
$||T\chi_{A}-S\chi_{A}||\geq 1$ (see e.g. \cite[Thm. 2]{Gr_infty}). 
Pick $T,S\in \mathcal{G}_{L^{\infty}(\mu)},\ T\neq S$, if such exist. It is easy to find a set $C\in \Sigma$ such that 
$T\chi_{C}\neq S\chi_{C}$. Now, 
\[\{U\in \mathcal{G}_{L^{\infty}(\mu)}:\ ||U\chi_{C}-T\chi_{C}||<\frac{1}{2}\}\cup \{V\in \mathcal{G}_{L^{\infty}(\mu)}:\ ||U\chi_{C}-T\chi_{C}||>\frac{1}{2}\}=\mathcal{G}_{L^{\infty}(\mu)}\]
is $\mathrm{SOT-}$separation completing the claim. 
\end{proof}

To conclude, let us make a few remarks about the homotopy $h$ appearing in the proof of Theorem \ref{thm1}.
We note that a resembling transformation was applied in \cite[p.251]{DD} in a different setting. 
The proof of Theorem \ref{thm1} can easily be modified so that $h$ becomes a homotopy on the set of linear isometric 
embeddings $L^{p}(\mu,\X)\rightarrow L^{p}(\mu,\X)$, and hence this set is $\mathrm{SOT}$-contractible 
for atomless $\mu$ and $p<\infty$. With equally small modifications it can be verified that the conclusion of Theorem \ref{thm1} 
remains valid, if one investigates the contractibility of $\mathrm{GL}_{L^{p}(\mu,\X)}$ in place of 
$\mathcal{G}_{L^{p}(\mu,\X)}$.

\end{document}